\title[Coxeter groups, symmetries, and rooted representations]{Coxeter groups, symmetries, and rooted representations}
\author[O Geneste]{Olivier Geneste}
\address{IMB, UMR 5584\\
CNRS, Univ. Bourgogne Franche-Comté\\
21000 Dijon\\
France}
\email{olivier.geneste@u-bourgogne.fr}
\urladdr{}
\author[L Paris]{Luis Paris}
\address{IMB, UMR 5584\\
CNRS, Univ. Bourgogne Franche-Comté\\
21000 Dijon\\
France}
\email{lparis@u-bourgogne.fr}
\urladdr{}
\newtheorem{thm}{Theorem}[section]
\newtheorem{lem}[thm]{Lemma}
\theoremstyle{definition}
\newtheorem*{rem}{Remark}
\numberwithin{equation}{section}
\renewcommand{\thefigure}{\ifnum \c@section>\z@ \thesection.\fi
 \@arabic\c@figure}
\begin{document}

\def\N{\mathbb N}\def\GL{{\rm GL}} \def\OO{\mathcal O}
\def\SS{\mathcal S} \def\Ker{{\rm Ker}} \def\Id{{\rm Id}}
\def\R{\mathbb R}



\maketitle


\section{Introduction}


Let $\Gamma$ be a Coxeter graph, and let $(W,S)$ be the Coxeter system of $\Gamma$. 
A \emph{symmetry} of $\Gamma$ is a permutation $g$ of $S$ such that $m_{g(s),g(t)} = m_{s,t}$ for all $s,t \in S$,
where $(m_{s,t})_{s,t \in S}$ is the Coxeter matrix of $\Gamma$.
Let $G$ be a group of symmetries of $\Gamma$. 
Then $G$ is necessarily finite, and it can be viewed as a group of automorphisms of $W$. 
We denote by $W^G$ the subgroup of $W$ fixed under the action of $G$. 
M\"uhlherr \cite{Muhlh1} and  H\'ee \cite{Hee1}, independently of one another, proved that $W^G$ is a Coxeter group.
None of them gave explicitly the Coxeter graph $\tilde \Gamma$ which defines $W^G$. 
However, a third proof, different from the other two, with an explicit description of $\tilde \Gamma$, is given in Crisp \cite{Crisp1, Crisp2}. 

Let $\Pi=\{ \epsilon_s \mid s \in S \}$ be a set in one-to-one correspondence with $S$, let $V$ be the real vector space having  $\Pi$ as a basis, and let $\langle ., . \rangle$ be the symmetric bilinear form on $V$ defined by 
$\langle \epsilon_s, \epsilon_t \rangle = - \cos ( \pi/m_{s,t})$ if $m_{s,t} \neq \infty$ and 
$\langle \epsilon_s, \epsilon_t \rangle = -1$ if  $m_{s,t} = \infty$.
For every $s \in S$ we define the linear transformation $f_s : V \to V$ by
$f_s(x) = x-2 \langle x,\epsilon_s \rangle \epsilon_s$.
Then the map $S \to \GL(V)$, $s \mapsto f_s$, induces a celebrated faithful linear representation $f : W \to GL(V)$, called the \emph{canonical representation} of $W$ (see Bourbaki \cite{Bourb1}). 
In our context, the triple $(V, \langle .,. \rangle, \Pi)$ will be called the \emph{canonical root basis} of $\Gamma$.

Let $G$ be a group of symmetries of $\Gamma$.
Then $G$ acts on $V$ sending $\epsilon_s$ to $\epsilon_{g(s)}$ for all $s \in S$, and this action leaves invariant the canonical form $\langle .,. \rangle$.
Hence, the canonical representation $f : W \to \GL(V)$ is equivariant, in the sense that $f (g (w)) = g \circ f(w) \circ g^{-1}$ for all $g \in G$ and all $w \in W$, and therefore $f$ induces a linear representation $f^G: W^G \to GL(V^G)$, where $V^G = \{ x \in V \mid g(x)=x \text{ for all } g \in G \}$.

A naive question would be: Is $f^G : W^G \to \GL(V^G)$ the canonical representation of $W^G$?
A positive answer would provide a way to (re)prove that $W^G$ is a Coxeter group and to determine the Coxeter graph of $W^G$.
Unfortunately, simple calculations show that $f^G$ is not the canonical representation in general.
Nevertheless, one can transpose this question to a larger family of linear representations, the rooted representations introduced by Krammer \cite{Kramm1, Kramm2}, and, in this context, the answer is yes. 
Our purpose is to show that. 

A \emph{root basis} of $\Gamma$ is a triple $(V, \langle . , . \rangle, \Pi)$, where $V$ is a finite dimensional real vector space, $\langle ., . \rangle$ is a symmetric bilinear form on $V$, and $\Pi=\{ \epsilon_s \mid s \in S \}$ is a collection of vectors in $V$ in one-to-one correspondence with $S$, that satisfies the following properties:  
\begin{itemize}
\item[(a)]
$\langle \epsilon_s, \epsilon_s \rangle=1$ for all $s \in S$;
\item[(b)]
for all $s,t \in S$, $s\neq t$, we have 
\[
\begin{array}{ll}
\langle \epsilon_s, \epsilon_t\rangle = - \cos ( \pi/m_{s,t}) & \text{if } m_{s,t} \neq \infty\,,\\
\langle \epsilon_s, \epsilon_t\rangle \in (-\infty, -1] & \text{if } m_{s,t} = \infty\,;
\end{array}
\]
\item[(c)]
there exists $\chi \in V^*$ such that $\chi(\epsilon_s) > 0$ for all $s \in S$.
\end{itemize}
As mentioned above, if $\Pi$ is a basis of $V$ and $\langle \epsilon_s, \epsilon_t \rangle = -1$ whenever $m_{s,t} =\infty$, then $(V, \langle .,. \rangle, \Pi)$ is called the \emph{canonical root basis} of $\Gamma$.

This definition is taken from Krammer's thesis \cite{Kramm1, Kramm2}.
It is both, a generalization of the canonical spaces and canonical forms defined by Bourbaki \cite{Bourb1}, and a new point of view on the theory of reflection groups developed by Vinberg \cite{Vinbe1}.
Note also that Condition (c) in the above definition often follows from Conditions (a) and (b), but not always (see  Krammer \cite[Proposition 6.1.2]{Kramm2}).

Let $(V, \langle .,. \rangle, \Pi)$ be a root basis of $\Gamma$.
For every $s \in S$ we define the linear transformation $f_s : V \to V$ by
$f_s(x) = x-2 \langle x,\epsilon_s \rangle \epsilon_s$.
The following theorem can be proved for any root basis in the same way as it is proved in Bourbaki \cite{Bourb1} for the canonical root basis.

\begin{thm}[Krammer \cite{Kramm1, Kramm2}]\label{thm1_1}
The map $S \to \GL (V)$, $s \mapsto f_s$, induces a faithful linear representation $f: W \to \GL (V)$.
\end{thm}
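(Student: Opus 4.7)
The plan is to establish the theorem in two separate steps: first, that the assignment $s \mapsto f_s$ respects the defining relations of $W$ and therefore extends to a homomorphism $f: W \to \GL(V)$; second, that this homomorphism is injective.

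For the first step, two families of relations must be checked. The involution relation $f_s^2 = \Id$ is an immediate computation using $\langle \epsilon_s, \epsilon_s\rangle = 1$, which also shows that each $f_s$ preserves the form $\langle .,. \rangle$. For the braid-type relations $(f_s f_t)^{m_{s,t}} = \Id$ with $m = m_{s,t}$ finite, I restrict to the plane $V_{s,t} = \R \epsilon_s + \R \epsilon_t$. The Gram matrix of $\langle .,. \rangle|_{V_{s,t}}$ has diagonal entries $1$ and off-diagonal entries $-\cos(\pi/m)$, hence determinant $\sin^2(\pi/m) > 0$, so the restriction is positive definite. Consequently $V = V_{s,t} \oplus V_{s,t}^\perp$; both $f_s$ and $f_t$ preserve this decomposition and act trivially on $V_{s,t}^\perp$. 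On the Euclidean plane $V_{s,t}$ they are orthogonal reflections whose mirror lines meet at angle $\pi/m$, so $f_s f_t$ is a rotation of order $m$. When $m_{s,t} = \infty$ there is no relation to verify.

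For the second step, I would adapt Bourbaki's roots-and-length argument, replacing the use of coordinates in $\Pi$ (unavailable when $\Pi$ is not a basis of $V$) by the linear functional $\chi$ provided by condition (c). Let $\Phi = f(W) \cdot \Pi \subset V$ be the set of roots, split into $\Phi^+ = \{\alpha \in \Phi : \chi(\alpha) > 0\}$ and $\Phi^- = \{\alpha \in \Phi : \chi(\alpha) < 0\}$. The key claim, proved by induction on $\ell(w)$ and using the exchange condition, is that
\[
f(w)(\epsilon_s) \in \sum_{t \in S} \R_{\geq 0}\, \epsilon_t \quad \text{whenever } \ell(ws) > \ell(w).
\]
Since $\chi(\epsilon_t) > 0$ for every $t$, every nonzero vector in this cone is positive, so $\Phi = \Phi^+ \sqcup \Phi^-$ and $f(w)(\epsilon_s) \in \Phi^+$ precisely when $\ell(ws) > \ell(w)$. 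If $f(w) = \Id$ and $w \neq 1$, choose $s$ with $\ell(ws) < \ell(w)$; then $\epsilon_s = f(w)(\epsilon_s)$ would lie in $\Phi^-$, contradicting $\chi(\epsilon_s) > 0$.

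The main obstacle will be the inductive cone-containment claim for $f(w)(\epsilon_s)$. The inductive step is delicate because one must analyze what happens inside each dihedral parabolic subgroup $\langle s,t\rangle$, and the case $m_{s,t} = \infty$ is particularly subtle: condition (b) only gives $\langle \epsilon_s, \epsilon_t\rangle \le -1$, so the trigonometric computation available when $m_{s,t}<\infty$ must be replaced by a direct inequality-based argument showing that the iterated images of $\epsilon_s$ and $\epsilon_t$ under powers of $f_s f_t$ remain in the cone $\R_{\geq 0} \epsilon_s + \R_{\geq 0} \epsilon_t$. This is precisely where the generality of root bases forces extra care beyond Bourbaki's canonical setting, and it explains why condition (c) must be imposed as an axiom rather than derived.
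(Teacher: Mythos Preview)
Your proposal is correct and matches what the paper intends: the paper does not supply its own proof of this theorem but cites Krammer and remarks that it ``can be proved for any root basis in the same way as it is proved in Bourbaki \cite{Bourb1} for the canonical root basis,'' and your two steps---checking the dihedral relations on each plane $V_{s,t}$, then running Bourbaki's length/positive-root induction with the functional $\chi$ from condition~(c) replacing coordinate positivity---are precisely that adaptation. Your identification of the $m_{s,t}=\infty$ case (where $\langle\epsilon_s,\epsilon_t\rangle\le -1$) as the place requiring extra care beyond the canonical setting is also accurate.
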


The representation $f: W \to \GL (V)$ of Theorem \ref{thm1_1} is called the \emph{rooted representation} of $W$ associated with $(V, \langle .,. \rangle, \Pi)$.

Let $G$ be a group of symmetries of $\Gamma$, and let $(V, \langle .,.\rangle, \Pi)$ be a root basis.
As for the canonical root basis, we assume that $G$ embeds in $\GL(V)$, satisfies $g(\epsilon_s) = \epsilon_{g (s)}$ for all $g \in G$ and all $s \in S$, and leaves invariant the form $\langle . , . \rangle$. 
Then the representation $f: W \to \GL (V)$ is equivariant in the sense that $f (g (w)) = g \circ f(w) \circ g^{-1}$ for all $g \in G$ and all $w \in W$, and therefore $f$ induces a linear representation $f^G: W^G \to \GL(V^G)$, where $V^G = \{ x \in V \mid g(x) = x \text{ for all } g \in G \}$.
The goal of this paper is to prove the following.

\begin{thm}\label{thm1_2}
\begin{itemize}
\item[(1)]
The group $W^G$ is a Coxeter group. 
\item[(2)]
Let $\tilde \Gamma$ denote the Coxeter graph of $W^G$.
There exists a subset $\tilde \Pi$ of $V^G$ such that $(V^G, \langle .,. \rangle, \tilde \Pi)$ is a root basis of $\tilde \Gamma$, and the induced representation $f^G: W^G \to \GL(V^G)$ is the rooted representation associated with $(V^G, \langle .,. \rangle, \tilde \Pi)$.
In particular, $f^G$ is faithful. 
\end{itemize}
\end{thm}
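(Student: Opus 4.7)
The plan is to build the sought root basis $\tilde\Pi$ inside $V^G$ by hand, verify the three defining axioms, and then deduce both assertions of the theorem simultaneously from Theorem \ref{thm1_1}. Call a $G$-orbit $O\subseteq S$ \emph{spherical} when the standard parabolic $W_O:=\langle O\rangle$ is finite. For such an $O$, the longest element $w_O$ of $W_O$ is $G$-invariant (since $G$ permutes $O$ and preserves length) and therefore lies in $W^G$. I would set $v_O:=\sum_{s\in O}\epsilon_s\in V_O\cap V^G$ and, using that $\langle\cdot,\cdot\rangle$ is positive definite on $V_O$ when $W_O$ is finite, define $\tilde\epsilon_O:=v_O/\sqrt{\langle v_O,v_O\rangle}$. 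Taking $\tilde\Pi:=\{\tilde\epsilon_O : O \text{ spherical}\}$ yields the candidate.

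The central geometric computation is to identify $f(w_O)|_{V^G}$ with the reflection $x\mapsto x-2\langle x,\tilde\epsilon_O\rangle\tilde\epsilon_O$ of $V^G$ in $\tilde\epsilon_O$. Three observations do the job: (i) $V$ splits $G$-equivariantly as the $\langle\cdot,\cdot\rangle$-orthogonal sum $V_O\oplus V_O^\perp$, and each $f_s$ with $s\in O$ fixes $V_O^\perp$ pointwise, hence so does $f(w_O)$; (ii) the longest element of a finite Coxeter group sends each simple root to the negative of another (the opposition involution), so $f(w_O)$ acts as $-1$ on the line $V_O\cap V^G=\R\tilde\epsilon_O$; (iii) for $x\in V^G$ the scalars $\langle x,\epsilon_s\rangle$ are constant on the orbit $O$, so the hyperplane $\tilde\epsilon_O^\perp\cap V^G$ coincides with $V_O^\perp\cap V^G$. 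Combined with the decomposition $V^G=(V_O\cap V^G)\oplus(V_O^\perp\cap V^G)$, these give the claim.

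Next I would check that $(V^G,\langle\cdot,\cdot\rangle,\tilde\Pi)$ is a root basis of some Coxeter graph $\tilde\Gamma$ on the vertex set $\tilde\Pi$. Axiom~(a) is built into the normalization, and axiom~(c) is inherited by restricting to $V^G$ any witness $\chi\in V^*$ for the original basis, since $\chi(\tilde\epsilon_O)$ is a positive multiple of $\sum_{s\in O}\chi(\epsilon_s)$. The hard part, which I expect to be the main technical obstacle, is axiom~(b): for distinct spherical orbits $O,O'$ one must show that $\langle\tilde\epsilon_O,\tilde\epsilon_{O'}\rangle$ equals $-\cos(\pi/\tilde m_{O,O'})$ when $w_Ow_{O'}$ has finite order $\tilde m_{O,O'}$ in $W$, and lies in $(-\infty,-1]$ otherwise. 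This is a rank-two statement to be attacked inside the parabolic $W_{O\cup O'}$, whose $G$-fixed subgroup $\langle w_O,w_{O'}\rangle$ is dihedral: the sum $\langle v_O,v_{O'}\rangle=\sum_{s\in O,\,t\in O'}\langle\epsilon_s,\epsilon_t\rangle$ must be compared with the structure of this dihedral group by a case analysis on the type of $W_{O\cup O'}$ and on the action of $G$ on $O\cup O'$, in the spirit of the explicit description in Crisp \cite{Crisp1}.

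To conclude, let $\tilde W$ be the Coxeter group of $\tilde\Gamma$. Theorem \ref{thm1_1} yields a faithful rooted representation $\tilde f:\tilde W\to\GL(V^G)$, and the second step gives $\tilde f(\tilde s_O)=f^G(w_O)$. The relations of $\tilde W$ lift to relations between the $w_O$'s in $W^G$, because they already hold inside the rank-two parabolics $W_{O\cup O'}$ of the previous step and $f$ is faithful on $W$; hence $\tilde s_O\mapsto w_O$ extends to a homomorphism $\tilde W\to W^G$. Surjectivity uses that the $w_O$ generate $W^G$, which I would either invoke from M\"uhlherr \cite{Muhlh1} and H\'ee \cite{Hee1}, or re-prove by induction on the length of a $G$-invariant reduced word. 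Its composition with $f^G$ equals the faithful map $\tilde f$, so the homomorphism is also injective, hence an isomorphism. This proves~(1), and together with $\tilde f(\tilde s_O)=f^G(w_O)$ identifies $f^G$ as the rooted representation of $W^G$ attached to $\tilde\Pi$, proving~(2).
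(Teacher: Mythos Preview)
Your proposal is correct and follows essentially the same architecture as the paper: define $\tilde\epsilon_O$ as the normalized orbit sum, verify the root-basis axioms for $(V^G,\langle\cdot,\cdot\rangle,\tilde\Pi)$ (with axiom~(b) isolated as the main case analysis, which is the paper's Lemma~\ref{lem3_3}), identify $f^G(w_O)$ with the reflection in $\tilde\epsilon_O$ (the paper's Lemma~\ref{lem3_4}), and then conclude by factoring the faithful rooted representation of $\tilde W$ through the surjection $\tilde W\to W^G$. The only noteworthy variation is your argument for the reflection identity, which goes through the $G$-equivariant orthogonal splitting $V=V_O\oplus V_O^\perp$ (legitimate since the form is positive definite on $V_O$) rather than the paper's direct coefficient computation; this is a cosmetic difference, not a change of strategy.
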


A similar approach is adopted in H\'ee \cite[Section 3]{Hee1}, with different definitions. 
One can easily show that the root system obtained from a root basis is a root system in Hée sense \cite{Hee1}, and that part of the results of the paper, such as the fact that $(V^G, \langle .,. \rangle, \tilde \Pi)$ is a root basis, can be deduced from Hée \cite{Hee1}. 
However, to get the explicit expression of the Coxeter graph $\tilde \Gamma$ of $W^G$, one would need extra arguments that can be either a rewrite of Lemma \ref{lem3_3}, or some arguments similar to that given in Crisp \cite{Crisp1, Crisp2}.
More generally, the whole theorem is more or less in the literature.
In particular, as mentioned before, Part (1) is explicit in M\"uhlherr \cite{Muhlh1},  H\'ee \cite{Hee1} and Crisp \cite{Crisp1, Crisp2}.
But, our aim is to provide a new point of view on the question with unified, short and self-contained proofs.

A more precise statement of Theorem \ref{thm1_2} is given in Section \ref{Sec2}.
In particular, the Coxeter graph $\tilde \Gamma$ and the set $\tilde \Pi$ are explicitly described. 
Section \ref{Sec3} is dedicated to the proofs.


\section{Statement}\label{Sec2}

The \emph{length} of an element $w\in W$, denoted by $\lg (w)$, is the shortest length of an expression of $w$ over the elements of $S$. 
An expression $w=s_1 \cdots s_\ell$ is called \emph{reduced} if $\ell = \lg (w)$.
It is known that, if $W$ is finite, then $W$ has a unique \emph{longest element}, that is, an element $w_0 \in W$ such that $\lg (w) \le \lg(w_0)$ for all $w \in W$, and this element is an involution (see Bourbaki \cite{Bourb1}).  

For $X \subset S$, we denote by $\Gamma_X$ the full subgraph of $\Gamma$ spanned by $X$, and by $W_X$ the subgroup of $W$ generated by $X$.
The subgroup $W_X$ is called a \emph{standard parabolic subgroup} of $W$.
By Bourbaki \cite{Bourb1}, $(W_X,X)$ is a Coxeter system of $\Gamma_X$.
If $W_X$ is finite, then we denote by $w_X$ the longest element of $W_X$.

Let $G$ be a group of symmetries of $\Gamma$. 
Now, we define a Coxeter matrix $\tilde M = \tilde M^G = (\tilde m_{X,Y})_{X,Y \in \SS}$ (and its associated Coxeter graph, $\tilde \Gamma$).
This will be the Coxeter matrix (and the Coxeter graph) of $W^G$ (see Theorem \ref{thm1_2} and Theorem \ref{thm2_3}).

We denote by $\OO$ the set of orbits of $G$ in $S$, and we set 
$\SS= \{ X \in \OO \mid W_X \text{ is finite}\}$.
Then $\SS$ is the set of indices of $\tilde M$ (which is the set of vertices of $\tilde \Gamma$).
Let $X,Y \in \SS$.
\begin{itemize}
\item
If $m_{s,t} = 2$ for all $s\in X$ and all $t \in Y$, then we set $\tilde m_{X,Y}=\tilde m_{Y,X}=2$.
\item
Let $k \in \{1, 2, 3, 4, 5 \}$.
If $\Gamma_{X \cup Y}$ is a disjoint union of copies of the Coxeter graph depicted in Figure \ref{fig2_1}\,(k), where the vertices corresponding to $x_1, x_2, \dots$ belong to $X$ and the vertices corresponding to $y_1,y_2, \dots$ belong to $Y$, then we set $\tilde m_{X,Y}= \tilde m_{Y,X} = m$ if $k=1$, $\tilde m_{X,Y}= \tilde m_{Y,X} = 4$ if $k\in \{2,3\}$, $\tilde m_{X,Y}= \tilde m_{Y,X} = 8$ if $k=4$, and $\tilde m_{X,Y}= \tilde m_{Y,X} = 6$ if $k=5$.
In this case we say that $(X,Y)$ is a \emph{bi-orbit of type $k$}.
\item
We set $\tilde m_{X,Y} = \tilde m_{Y,X}= \infty$ in the remaining cases.
\end{itemize}

\begin{figure}[ht!]
\begin{center}
\begin{tabular}{ccc}
\parbox[c]{1.2cm}{\includegraphics[width=1cm]{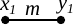}}
&
\parbox[c]{2cm}{\includegraphics[width=1.8cm]{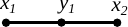}}
&
\parbox[c]{2.8cm}{\includegraphics[width=2.6cm]{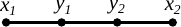}}
\\
(1) & (2) & (3)
\end{tabular}

\smallskip
\begin{tabular}{cc}
\parbox[c]{2.8cm}{\includegraphics[width=2.6cm]{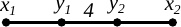}}
&
\parbox[c]{2cm}{\includegraphics[width=1.8cm]{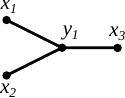}}
\\
(4) & (5)
\end{tabular}

\caption{Bi-orbits.}\label{fig2_1}

\end{center}
\end{figure}

The next lemma will be used in the definition of the set $\tilde \Pi$.
It is well-known and can be easily proved using \cite[Chapter V, Section 4, Subsection 8]{Bourb1}.

\begin{lem}\label{lem2_2}
Let $(V, \langle .,. \rangle, \Pi)$ be a root basis of $\Gamma$.
Suppose that $W$ is finite and that $\Pi$ spans $V$.
Then $\langle .,. \rangle$ is a scalar product, and $(V, \langle .,. \rangle, \Pi)$ is the canonical root basis of $\Gamma$.
In particular, $\Pi$ is a basis of $V$.
\end{lem}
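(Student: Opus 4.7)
The plan is to identify $(V, \langle .,. \rangle, \Pi)$ with the canonical root basis of $\Gamma$ by constructing a bijective isometry between them. First I would note that since $W$ is finite, no label $m_{s,t}$ can equal $\infty$, for otherwise the standard parabolic $W_{\{s,t\}}$ would be infinite dihedral and $W$ would be infinite. Condition (b) of the definition of a root basis then forces $\langle \epsilon_s, \epsilon_t \rangle = -\cos(\pi/m_{s,t})$ for all $s \neq t$, matching exactly the values prescribed for the canonical root basis. Combined with condition (a), this shows that the Gram matrix of the family $\Pi$ coincides with the Gram matrix of the canonical generating set.

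Next, I would let $(V_0, \langle .,. \rangle_0, \Pi_0)$ with $\Pi_0 = \{\epsilon_s^0 \mid s \in S\}$ denote the canonical root basis of $\Gamma$, where $\Pi_0$ is a basis of $V_0$ by definition. At this point I would invoke the cited result of Bourbaki \cite[Ch.~V, \S4, no.~8]{Bourb1}, namely that the finiteness of $W$ is equivalent to the canonical form $\langle .,. \rangle_0$ being positive definite. I would then define the linear map $\phi: V_0 \to V$ by $\epsilon_s^0 \mapsto \epsilon_s$; this map is surjective since $\Pi$ spans $V$ by hypothesis, and by the matching of Gram matrices it satisfies $\langle \phi(v), \phi(w) \rangle = \langle v, w \rangle_0$ for all $v, w \in V_0$.

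To conclude, I would check injectivity of $\phi$: if $v \in \ker \phi$, then $\langle v, v \rangle_0 = \langle \phi(v), \phi(v) \rangle = 0$, and positive definiteness of $\langle .,. \rangle_0$ forces $v = 0$. Hence $\phi$ is a bijective isometry, so $\Pi = \phi(\Pi_0)$ is a basis of $V$, the form $\langle .,. \rangle$ is positive definite on $V$, and $(V, \langle .,. \rangle, \Pi)$ coincides with the canonical root basis of $\Gamma$ via $\phi$. I do not expect a real obstacle here: the whole argument is essentially linear algebra once the Bourbaki input on positive definiteness is accepted, and I note in passing that condition (c) of the root basis definition plays no role in the proof.
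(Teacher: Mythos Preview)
Your proposal is correct and follows exactly the route the paper indicates: the paper does not write out a proof but simply points to \cite[Chapter V, \S4, no.~8]{Bourb1}, and your argument is precisely the natural way to unpack that reference --- rule out labels $m_{s,t}=\infty$, match the Gram matrices, use Bourbaki's positive-definiteness criterion for finite $W$, and conclude via the obvious linear map $\phi:V_0\to V$. There is nothing to add.
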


We turn back to the hypothesis of Theorem \ref{thm1_2}, that is, $\Gamma$ is any Coxeter graph, $G$ is a group of symmetries of $\Gamma$, and $(V, \langle .,. \rangle, \Pi)$ is a root basis of $\Gamma$.
We assume that $G$ embeds in $\GL(V)$ so that the form $\langle .,. \rangle$ is invariant under the action of $G$, and $g( \epsilon_s) = \epsilon_{g(s)}$ for all $s \in S$ and all $g \in G$. 

Let $X$ be an element of $\SS$, that is, an orbit of $G$ in $S$ such that $W_X$ is finite. 
Set $\Pi_X = \{ \epsilon_s \mid s \in X\}$, and denote by $V_X$ the linear subspace of $V$ spanned by $\Pi_X$, and by $\langle .,. \rangle_X$ the restriction of $\langle .,. \rangle$ to $V_X \times V_X$.
By Lemma \ref{lem2_2}, $\Pi_X$ is a basis of $V_X$ and $\langle .,. \rangle_X$ is a scalar product.
Let $a_X=\sum_{s \in X} \epsilon_s$.
Note that $a_X \in V^G$, hence, by the above, $a_X \neq 0$ and $\| a_X\| >0$.
We set   
$\tilde \epsilon_X = \frac{a_X}{\|a_X\|}$
for all $X \in \SS$, and   
$\tilde \Pi = \tilde \Pi^G = \{ \tilde \epsilon_X \mid X \in \SS \}$.
The main result of the paper, with a precise statement, is the following.

\begin{thm}\label{thm2_3}
\begin{itemize}
\item[(1)]
The set $\SS_W=\{w_X \mid X\in \SS \}$ generates $W^G$, and $(W^G, \SS_W)$ is a Coxeter system of $\tilde \Gamma$.
\item[(2)]
The triple $(V^G, \langle .,. \rangle, \tilde \Pi)$ is a root basis of $\tilde \Gamma$, and the induced representation $f^G: W^G \to \GL(V^G)$ is the rooted representation associated with $(V^G, \langle .,. \rangle, \tilde \Pi)$.
In particular, $f^G$ is faithful.
\end{itemize}
\end{thm}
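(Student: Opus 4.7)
The plan is to prove parts (1) and (2) of Theorem \ref{thm2_3} jointly in three stages: first identify $(V^G, \langle .,.\rangle, \tilde \Pi)$ as a root basis of $\tilde \Gamma$; second, show that each longest element $w_X$ acts on $V^G$ as the reflection associated with $\tilde \epsilon_X$; and third, establish that $\SS_W$ generates $W^G$ via a descent-set induction. I would verify the three axioms (a)--(c) of the root basis definition for $(V^G, \langle .,.\rangle, \tilde \Pi)$. Axiom (a) is immediate from the normalisation $\tilde \epsilon_X = a_X/\|a_X\|$, since by Lemma \ref{lem2_2} the form $\langle .,.\rangle_X$ is a genuine scalar product on $V_X \ni a_X$. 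Axiom (c) is inherited from the ambient root basis: any $\chi \in V^*$ with $\chi(\epsilon_s) > 0$ for all $s \in S$ gives $\chi(\tilde \epsilon_X) = \|a_X\|^{-1} \sum_{s \in X} \chi(\epsilon_s) > 0$. The computational heart is axiom (b), which requires $\langle \tilde \epsilon_X, \tilde \epsilon_Y\rangle = -\cos(\pi/\tilde m_{X,Y})$ when $\tilde m_{X,Y} < \infty$ and $\langle \tilde \epsilon_X, \tilde \epsilon_Y\rangle \leq -1$ otherwise. I would proceed by examining $\Gamma_{X \cup Y}$ case by case: in each of the five bi-orbit configurations of Figure \ref{fig2_1} the subsystem on $X \cup Y$ is finite, so Lemma \ref{lem2_2} lets one work in its canonical realisation and read off $\langle \tilde \epsilon_X, \tilde \epsilon_Y\rangle$ by an elementary computation, while configurations not in the figure lead by an explicit estimate to $\langle a_X, a_Y\rangle \leq -\|a_X\|\,\|a_Y\|$, giving the case $\tilde m_{X,Y} = \infty$.

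For the second stage, observe that each $W_X$ is $G$-stable, hence its unique longest element $w_X$ lies in $W^G$. Since the longest element of a finite Coxeter system sends every simple root $\epsilon_s$ to $-\epsilon_{\sigma(s)}$ for some involution $\sigma$ of the generating set, we get $w_X(a_X) = -a_X$, i.e.\ $w_X(\tilde \epsilon_X) = -\tilde \epsilon_X$. Next, if $x \in V^G$ satisfies $\langle x, \tilde \epsilon_X\rangle = 0$, then $G$-invariance of $x$ combined with $G$-invariance of $\langle .,.\rangle$ forces the scalars $\langle x, \epsilon_s\rangle$ ($s \in X$) to be constant on the orbit $X$ with zero sum, hence all zero; then $f_s(x) = x$ for every $s \in X$, so $w_X(x) = x$. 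Combined, $f_{w_X}|_{V^G}$ coincides with the reflection $\tilde r_X$ in $\tilde \epsilon_X$ determined by the restriction of $\langle .,.\rangle$ to $V^G$.

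Stages one and two, together with Theorem \ref{thm1_1} applied to $(V^G, \langle .,.\rangle, \tilde \Pi)$, produce a faithful rooted representation $\tilde f : \tilde W \to \GL(V^G)$ of the Coxeter system $(\tilde W, \{r_X\})$ of $\tilde \Gamma$, and an injective homomorphism $\varphi : \tilde W \to W^G$ sending $r_X$ to $w_X$, with image $\langle \SS_W\rangle$. Part (2) then follows, and Part (1) reduces to the surjectivity of $\varphi$, that is, to $\langle \SS_W\rangle = W^G$. For this I would induct on $\lg(w)$ using the classical fact that the left descent set $D_L(w) = \{s \in S \mid \lg(sw) < \lg(w)\}$ generates a finite parabolic subgroup for every $w \in W$. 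If $w \in W^G$ and $w \neq 1$, then $D_L(w)$ is nonempty and $G$-invariant; picking any orbit $X \subseteq D_L(w)$ gives $X \in \SS$ and, by the subword property of the left weak order, $\lg(w_X^{-1} w) = \lg(w) - \lg(w_X) < \lg(w)$. The induction hypothesis applied to $w_X^{-1} w \in W^G$ then writes $w$ as a word in $\SS_W$.

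The main obstacle is the first stage: proving $\langle \tilde \epsilon_X, \tilde \epsilon_Y\rangle = -\cos(\pi/\tilde m_{X,Y})$ cleanly across all five bi-orbit types and simultaneously settling the $\tilde m_{X,Y} = \infty$ case. This classification-style computation (essentially Lemma \ref{lem3_3}) carries the bulk of the analytic content; once it is in hand, Theorem \ref{thm1_1} and the descent-set induction assemble the remaining pieces with little further effort.
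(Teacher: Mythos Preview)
Your overall plan matches the paper's: verify the root-basis axioms for $(V^G,\langle.,.\rangle,\tilde\Pi)$ (the paper's Lemma \ref{lem3_3}), show that $w_X$ acts on $V^G$ as the reflection in $\tilde\epsilon_X$ (Lemma \ref{lem3_4}), and then invoke faithfulness of rooted representations (Theorem \ref{thm1_1}). Your descent-set induction for $\langle\SS_W\rangle=W^G$ is a clean self-contained substitute for the paper's citation of M\"uhlherr in Lemma \ref{lem3_1}\,(1), and your stage-two argument (constancy of $\langle x,\epsilon_s\rangle$ over $s\in X$ forcing all of them to vanish) is a pleasant variant of the paper's computation.

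There is, however, a genuine gap. To obtain the homomorphism $\varphi:\tilde W\to W^G$ sending $r_X\mapsto w_X$ you must first know that the Coxeter relations $(w_Xw_Y)^{\tilde m_{X,Y}}=1$ hold in $W$ whenever $\tilde m_{X,Y}<\infty$. Your stages one and two only yield $f^G\big((w_Xw_Y)^{\tilde m_{X,Y}}\big)=\Id$ on $V^G$, i.e.\ that this element lies in $\Ker f^G$; but at this point of the argument $f^G$ is not yet known to be injective, so you cannot conclude the element is trivial, and without that $\varphi$ is not defined and the faithfulness argument never starts. The paper handles this separately as Lemma \ref{lem3_1}\,(2): in each finite bi-orbit type (and in the case $\tilde m_{X,Y}=2$) the graph $\Gamma_{X\cup Y}$ is a disjoint union of small finite-type diagrams, and one checks the order of $w_Xw_Y$ componentwise, using the explicit form of $w_X$, $w_Y$ in those components. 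This step is short but not optional; once you insert it, your argument goes through exactly as the paper's does.
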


\begin{rem}
The proof of Part (1) of Theorem \ref{thm2_3} uses the induced representation  $f^G: W^G \to \GL (V^G)$.
Nevertheless, the conclusion of Part (1) is always true because there is always a root basis which satisfies the hypothesis of the theorem: the canonical root basis. 
\end{rem}


\section{Proof}\label{Sec3}

We assume given a Coxeter graph $\Gamma$, a root basis $(V, \langle .,. \rangle, \Pi)$ of $\Gamma$, and a group $G$ of symmetries of $\Gamma$. 
We assume that $G$ embeds in $\GL(V)$, satisfies $g(\epsilon_s) = \epsilon_{g (s)}$ for all $g \in G$ and all $s \in S$, and leaves invariant the form $\langle . , . \rangle$.

Let $f : W \to \GL (V)$ be the rooted representation of $W$ associated with $(V, \langle .,. \rangle, \Pi)$.
From now on, in order to simplify the notations, we will assume that $W$ acts on $V$ via $f$, and we will write $w(x)$ in place of $f(w)(x)$ for $w \in W$ and $x \in V$.
Lemmas \ref{lem3_1} to \ref{lem3_4} are preliminaries to the proof of Theorem \ref{thm2_3}.
Lemma \ref{lem3_1}\,(1) is well-known. 
It is a direct consequence of M\"uhlherr \cite[Lemma 2.8]{Muhlh1}, and its proof can be found in the beginning of the proof of M\"uhlherr \cite[Theorem 1.3]{Muhlh1}.
Lemma \ref{lem3_1}\,(2) is also know.
Its proof is implicit in Crisp \cite{Crisp1}, but, as far as we know, it is not explicitly given anywhere else. 

\begin{lem}\label{lem3_1}
\begin{itemize}
\item[(1)]
The group $W^G$ is generated by $\SS_W$.
\item[(2)]
We have $(w_X w_Y)^{\tilde m_{X,Y}} = 1$ for all $X,Y \in \SS$ such that $\tilde m_{X,Y} \neq \infty$.
\end{itemize}
\end{lem}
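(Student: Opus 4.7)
The proof splits naturally along the two parts, with Part (1) handled by a length induction and Part (2) by a case analysis.

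\textbf{Part (1).} I would argue by induction on $\lg(w)$ for $w\in W^G$. The base case $\lg(w)=0$ is trivial. For the inductive step, pick any left descent $s\in S$ of $w$, i.e., $\lg(sw)<\lg(w)$. Every $g\in G$ acts on $W$ as a length-preserving automorphism (being induced by a permutation of $S$ that preserves the Coxeter matrix) and fixes $w$, so applying $g$ to $\lg(sw)<\lg(w)$ yields $\lg(g(s)w)<\lg(w)$. Consequently the whole $G$-orbit $X=G\cdot s$ consists of left descents of $w$. I would then invoke the classical fact (Bourbaki \cite{Bourb1}, Ch.~IV; see also M\"uhlherr \cite[Lemma~2.8]{Muhlh1}) that for any set $T\subseteq S$ of left descents of $w$, the standard parabolic $W_T$ is finite and $w_T$ is a left divisor of $w$ in the length-additive sense. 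Applied to $T=X$, this gives $X\in\SS$ and a factorisation $w=w_Xw'$ with $\lg(w')<\lg(w)$. Because $X$ is $G$-invariant, $g(w_X)$ is again the longest element of $W_X$, hence equals $w_X$; thus $w_X\in W^G$, and $w'=w_X^{-1}w\in W^G$ too. The inductive hypothesis then gives $w'\in\langle\SS_W\rangle$, so $w=w_X w'\in\langle\SS_W\rangle$.

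\textbf{Part (2).} I would split according to the definition of $\tilde m_{X,Y}$. If $m_{s,t}=2$ for every $s\in X$ and every $t\in Y$, then every generator of $W_X$ commutes with every generator of $W_Y$, so $w_X$ and $w_Y$ commute and $(w_Xw_Y)^2=w_X^2w_Y^2=1$, as required since $\tilde m_{X,Y}=2$. Otherwise $(X,Y)$ is a bi-orbit of some type $k\in\{1,\dots,5\}$ and $\Gamma_{X\cup Y}$ is a disjoint union of finitely many copies $\Delta^{(1)},\dots,\Delta^{(r)}$ of the diagram displayed in Figure~\ref{fig2_1}\,$(k)$. Set $X_i=X\cap\Delta^{(i)}$ and $Y_i=Y\cap\Delta^{(i)}$. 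Since vertices from distinct copies are not joined in $\Gamma$, one has the direct-product decomposition $W_{X\cup Y}=W_{X_1\cup Y_1}\times\cdots\times W_{X_r\cup Y_r}$, and the longest elements split accordingly as $w_X=\prod_iw_{X_i}$ and $w_Y=\prod_iw_{Y_i}$. As all copies are isomorphic as labelled Coxeter diagrams, the orders of $w_{X_i}w_{Y_i}$ all coincide, so the order of $w_Xw_Y$ equals the order in any one copy. It therefore suffices to verify inside a single copy that $w_{X_1}w_{Y_1}$ has order exactly $\tilde m_{X,Y}$, namely $m$, $4$, $4$, $8$, $6$ according as $k=1,2,3,4,5$.

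\textbf{Main obstacle.} Part (1) is immediate once one grants the standard left-descent divisibility theorem. The real work lies in the finite check at the end of Part (2): type $1$ is trivial from the Coxeter relation $(st)^m=1$, but types $2$--$5$ each demand an explicit verification of the order of $w_{X_1}w_{Y_1}$ in the corresponding small finite Coxeter group $W_{X_1\cup Y_1}$. This can be carried out either by a direct matrix computation on the canonical space, or by identifying the rank-two dihedral subgroup $\langle w_{X_1},w_{Y_1}\rangle$ as a reflection subgroup of the ambient finite group. Type $4$, which produces the unusual value $8$, is the most delicate entry to pin down.
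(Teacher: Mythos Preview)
Your proposal is correct and follows essentially the same route as the paper: Part~(1) is the standard descent/length--induction argument (which the paper simply cites from M\"uhlherr~\cite{Muhlh1}), and Part~(2) uses the same connected-component decomposition of $\Gamma_{X\cup Y}$ followed by a finite check in a single copy, invoking the representation (Theorem~\ref{thm1_1}) for the verification. One minor simplification: the lemma only asks that $(w_{X_1}w_{Y_1})^{\tilde m_{X,Y}}=1$, not that the order is \emph{exactly} $\tilde m_{X,Y}$, so the finite verification you flag as the main obstacle is a touch lighter than you suggest.
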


\begin{proof}
As mentioned above, the proof of Part (1) can be found in M\"uhlherr \cite{Muhlh1}.
So, we only need to prove Part (2).
Let $X \subset S$ be such that $\Gamma_X$ is a disjoint union of vertices ({\it i.e.} $\Gamma_X$ has no edge).
Then $W_X$ is finite and $w_X = \prod_{s \in X} s$.
Let $X=\{s,t\}$ be a pair included in $S$ such that $m_{s,t}=m < \infty$.
Then $W_X$ is finite, $w_X=(st)^{\frac{m}{2}}$ if $m$ is even, and $w_X=(st)^{\frac{m-1}{2}}s$ if $m$ is odd.
Now, let $X, Y \in \SS$.
If $m_{s,t}=2$ for all $s \in X$ and $t \in Y$, then $w_X$ and $w_Y$ commute, hence $(w_Xw_Y)^2=1$, as $w_X$ and $w_Y$ are both involutions.
Suppose that $(X,Y)$ is a bi-orbit of type $j$, where $j \in \{1, 2, 3, 4, 5\}$.
Let $\Gamma_1, \dots, \Gamma_\ell$ be the connected components of $\Gamma_{X \cup Y}$.
For $i \in \{1, \dots, \ell \}$, we denote by $Z_i$ the set of vertices of $\Gamma_i$, and we set $X_i = X \cap Z_i$ and $Y_i = Y \cap Z_i$.
We have $w_X = \prod_{i=1}^\ell w_{X_i}$ and $w_Y = \prod_{i=1}^\ell w_{Y_i}$.
Moreover, using the above observation together with Theorem \ref{thm1_1}, it is easily checked that $(w_{X_i} w_{Y_i})^{\tilde m_{X,Y}}=1$ for all $i$.
It follows that 
$(w_X w_Y)^{\tilde m_{X,Y}} = \prod_{i=1}^\ell (w_{X_i} w_{Y_i})^{\tilde m_{X,Y}} = 1$.
\end{proof}

\begin{lem}\label{lem3_2}
Let $X \in \SS$. 
Then one of the following two alternatives holds.
\begin{itemize}
\item[(I)]
$\Gamma_X$ is a disjoint union of vertices (i.e. $\Gamma_X$ has no edge).
\item[(II)]
There exists $m \in \N$, $m \ge 3$, such that $\Gamma_X$ is a disjoint union of copies of the Coxeter graph depicted in Figure \ref{fig2_1}\,(1).
\end{itemize}
\end{lem}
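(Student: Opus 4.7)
The plan is to exploit two facts. First, $X$ being a $G$-orbit means $G$ acts transitively on the vertex set of $\Gamma_X$. Second, each $g \in G$ is a symmetry of $\Gamma$, so it preserves edges and their labels; consequently $G$ permutes the connected components of $\Gamma_X$, and vertex-transitivity forces this permutation to be transitive on the set of components. Therefore all connected components of $\Gamma_X$ are pairwise isomorphic as labeled Coxeter graphs; in particular, any two edges of $\Gamma_X$ carry the same label $m$.

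Fix one connected component and let $Z \subseteq X$ be its vertex set. Since $W_X$ decomposes as the direct product of the $W_{Z_i}$ over the connected components $Z_i$ of $\Gamma_X$, and $W_X$ is finite, the subgroup $W_Z$ is also finite, so $\Gamma_Z$ is a connected finite-type Coxeter graph. The stabilizer $G_Z = \{g \in G \mid g(Z) = Z\}$ acts on $\Gamma_Z$, and since the single $G$-orbit $X$ meets $Z$ in a $G_Z$-orbit that must exhaust $Z$, this action is vertex-transitive on $\Gamma_Z$.

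The problem thus reduces to a classification question: which connected finite-type Coxeter graphs admit a vertex-transitive group of Coxeter-graph symmetries? I claim that only $A_1$ and the dihedral graphs $I_2(m)$ for $m \geq 3$ do. This is established by routine case-by-case inspection of the list $A_n$, $B_n$, $D_n$, $E_6$, $E_7$, $E_8$, $F_4$, $H_3$, $H_4$, $I_2(m)$: in types $B_n$, $F_4$, $H_3$, $H_4$ any symmetry must preserve the distinct edge labels and hence fix some vertex; in types $A_n$ with $n \geq 3$, $D_n$, and $E_6$ the full automorphism group is at most $S_3$ (for $D_4$) or $\mathbb{Z}/2$ and visibly fixes a central or branching vertex; and $E_7$, $E_8$ have trivial automorphism group. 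The dihedral case $I_2(m)$, $m \geq 3$, is indeed transitive via the swap of the two vertices, and $A_1$ is trivially so.

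Applying this dichotomy to $\Gamma_Z$, we land either in the case of an isolated vertex, yielding alternative (I), or in the case of an $I_2(m)$-edge for some $m \geq 3$, yielding alternative (II); the uniformity of $m$ across all connected components of $\Gamma_X$ is guaranteed by the first paragraph. The only real effort in the proof is the classification check in the third paragraph, and this is the step I expect to be the main (though routine) obstacle.
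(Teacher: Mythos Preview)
Your proof is correct, but it takes a heavier route than the paper's. The paper avoids the classification of finite Coxeter graphs entirely by a short degree argument: since $W_X$ is finite, each connected component of $\Gamma_X$ is a tree and hence contains a vertex of degree at most $1$; but vertex-transitivity of the $G$-action on $X$ forces all vertices of $\Gamma_X$ to have the same degree $v_s(X)$, so this common degree is $0$ (Alternative~I) or $1$ (Alternative~II). Your approach instead reduces to asking which connected finite-type Coxeter graphs admit a vertex-transitive symmetry group and settles it by running through the list $A_n, B_n, D_n, E_6, E_7, E_8, F_4, H_3, H_4, I_2(m)$. Both arguments are valid; the paper's buys brevity and is classification-free (it uses only that finite-type graphs are forests), while yours is more hands-on but arrives at the same endpoint.

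One small expositional wrinkle: the claim at the end of your first paragraph that ``any two edges of $\Gamma_X$ carry the same label $m$'' does not follow merely from the components being pairwise isomorphic---a component such as $F_4$ would carry mixed labels. It only becomes true after your third-paragraph classification shows each component is $A_1$ or $I_2(m)$, at which point component-isomorphism yields a common $m$. This does not affect correctness, only the order of exposition.
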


\begin{proof}
For $s \in X$ we set $v_s(X) = |\{ t \in X \mid m_{s,t} \ge 3\}|$.
Since $W_X$ is finite, the connected components of $\Gamma_X$ are trees (see Bourbaki \cite{Bourb1}), hence there exists $s \in X$ such that $v_s(X) \le 1$.
On the other hand, since $G$ acts transitively on $X$, we have $v_s(X) = v_t(X)$ for all $s,t \in X$.
So, either $v_s(X)=0$ for all $s \in X$, or $v_s(X)=1$ for all $s \in X$.
If $v_s(X)=0$ for all $s \in X$, then we are in Alternative (I).
If $v_s(X)=1$ for all $s \in X$, then we are in Alternative (II).
\end{proof}

Let $X \in \SS$.
We say that $X$ is of \emph{type $I$} if $\Gamma_X$ satisfies Condition (I) of Lemma \ref{lem3_2}, and that $X$ is of \emph{type $II_m$} if $\Gamma_X$ satisfies Condition (II).

\begin{lem}\label{lem3_3}
Let $X, Y \in \SS$, $X \neq Y$.
Then 
\[
\begin{array}{ll}
\langle \tilde \epsilon_X, \tilde \epsilon_Y \rangle = - \cos ( \pi /\tilde m_{X,Y}) & \text{if }\tilde m_{X,Y} \neq \infty\,,\\
\langle \tilde \epsilon_X, \tilde \epsilon_Y \rangle \in ( -\infty, -1] & \text{if }\tilde m_{X,Y} = \infty\,.
\end{array}
\]
\end{lem}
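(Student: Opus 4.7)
The plan is to reduce the claim to a direct computation of $\langle \tilde\epsilon_X,\tilde\epsilon_Y\rangle=\langle a_X,a_Y\rangle/(\|a_X\|\,\|a_Y\|)$, organized case-by-case on the value of $\tilde m_{X,Y}$.

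First I would use Lemma \ref{lem3_2} to compute $\|a_Z\|^2$ for $Z\in\SS$. When $Z$ is of type I, only the diagonal terms of $\sum_{s,s'\in Z}\langle\epsilon_s,\epsilon_{s'}\rangle$ survive and $\|a_Z\|^2=|Z|$. When $Z$ is of type $II_m$, each $s\in Z$ has exactly one neighbour $s'\in Z$ with $\langle\epsilon_s,\epsilon_{s'}\rangle=-\cos(\pi/m)$, so $\|a_Z\|^2=|Z|(1-\cos(\pi/m))=2|Z|\sin^2(\pi/(2m))$.

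Then I would split on the definition of $\tilde m_{X,Y}$. The case $\tilde m_{X,Y}=2$ is immediate, since all cross terms $\langle\epsilon_s,\epsilon_t\rangle$ with $s\in X$, $t\in Y$ vanish. For each of the five bi-orbit types, $\Gamma_{X\cup Y}$ is a disjoint union of $k$ copies of the same small Coxeter graph from Figure \ref{fig2_1}, and both $\langle a_X,a_Y\rangle$ and the squared norms $\|a_X\|^2$, $\|a_Y\|^2$ scale by $k$, so the normalized ratio reduces to the one-copy computation. In that one copy I would count the edges between $X$ and $Y$, evaluate $\langle a_X,a_Y\rangle$ as the corresponding sum of $-\cos(\pi/m_{s,t})$, and divide; the standard values $\cos(\pi/3)=1/2$, $\cos(\pi/4)=\sqrt2/2$, $\cos(\pi/6)=\sqrt3/2$, and $\cos(\pi/8)=\tfrac12\sqrt{2+\sqrt2}$, together with the half-angle identity used above, make each ratio collapse to $-\cos(\pi/\tilde m_{X,Y})$.

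For $\tilde m_{X,Y}=\infty$, the task becomes showing $\langle\tilde\epsilon_X,\tilde\epsilon_Y\rangle\le -1$. The sign is automatic because $\langle\epsilon_s,\epsilon_t\rangle\le 0$ whenever $s\ne t$, so the content of the claim is the inequality $\langle a_X,a_Y\rangle^2\ge \|a_X\|^2\|a_Y\|^2$. I would then enumerate the configurations of $\Gamma_{X\cup Y}$ that fall outside the five listed bi-orbit patterns---some $m_{s,t}=\infty$ with $s\in X$, $t\in Y$; a label larger than the ones appearing in Figure \ref{fig2_1}; an extra edge in a component; or an incompatible distribution of $X$- and $Y$-vertices across components---and verify the estimate in each sub-case by direct calculation, using the norm formulas from the second paragraph. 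This enumeration is the main obstacle I anticipate: whereas the bi-orbit case reduces to a finite list of trigonometric identities, here one must carefully catalogue the failure modes and confirm the inequality for every one of them.
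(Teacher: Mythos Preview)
Your plan is sound and your bi-orbit computations match the paper's, but your organization differs from the paper's in a way that matters precisely at the point you flag as the main obstacle.

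The paper does not split on the value of $\tilde m_{X,Y}$. Instead it uses the $G$-transitivity on $X$ and on $Y$ to introduce two per-vertex invariants: for a fixed $s\in X$, the valence $v_X=|\{t\in Y\mid m_{s,t}\ge 3\}|$ and the row sum $p_X=\sum_{t\in Y}\langle\epsilon_s,\epsilon_t\rangle$, which are independent of $s$; likewise $v_Y$. Together with your norm formulas, the double-counting identity $|X|\,v_X=|Y|\,v_Y$, and $\langle a_X,a_Y\rangle=|X|\,p_X$, this yields in each of the three cases (type of $X$, type of $Y$) $\in\{(\mathrm I,\mathrm I),(\mathrm{II}_m,\mathrm I),(\mathrm{II}_m,\mathrm{II}_{m'})\}$ a single closed expression for $\langle\tilde\epsilon_X,\tilde\epsilon_Y\rangle$ in terms of $p_X,v_X,v_Y$ (and $m,m'$). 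Plugging in the crude bound $p_X\le -v_X/2$ immediately gives $\langle\tilde\epsilon_X,\tilde\epsilon_Y\rangle\le -1$ for all but a handful of small values of $(v_X,v_Y,p_X,m)$, and those residual values are exactly the bi-orbit configurations, where the formula evaluates to $-\cos(\pi/\tilde m_{X,Y})$. So the finite and infinite cases are handled by the \emph{same} formula; there is no separate ``enumerate the failure modes'' step.

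Your approach would work, but the enumeration you anticipate is exactly what the invariants $v_X,v_Y,p_X$ and the identity $|X|\,v_X=|Y|\,v_Y$ are designed to short-circuit: they compress all possible component shapes of $\Gamma_{X\cup Y}$ into a small numerical parameter space. If you want to keep your organization, I would still recommend introducing these invariants for the $\tilde m_{X,Y}=\infty$ branch; once you have the closed formula, the inequality $\langle\tilde\epsilon_X,\tilde\epsilon_Y\rangle\le -1$ becomes a few lines of arithmetic rather than a taxonomy of graphs.
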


\begin{proof}
Observe that, if $m_{s,t}=2$ for all $s \in X$ and all $t \in Y$, then $\langle \tilde \epsilon_X, \tilde \epsilon_Y \rangle = 0$ and $\tilde m_{X,Y}=2$.
Hence, we can assume that there exist $s \in X$ and $t \in Y$ such that $m_{s,t} \ge 3$.
Since $G$ acts transitively on $X$ and leaves invariant $Y$, it follows that, for all $s \in X$, there exists $t \in Y$ such that $m_{s,t} \ge 3$.
Similarly, for all $t \in Y$, there exists $s \in X$ such that $m_{s,t} \ge 3$.

Recall that 
$a_X = \sum_{s \in X} \epsilon_s$, $a_Y = \sum_{t \in Y} \epsilon_t$ $\tilde \epsilon_X = \frac{a_X}{\| a_X \|}$, $\tilde \epsilon_Y = \frac{a_Y}{\| a_Y \|}$.
Choose $s \in X$ and set
$v_X=|\{ t \in Y \mid m_{s,t} \ge 3\}|$ and $p_X = \sum_{t \in Y} \langle \epsilon_s, \epsilon_t \rangle = \langle \epsilon_s, a_Y \rangle$.
Since $G$ acts transitively on $X$ and leaves invariant $Y$, these definitions do not depend on the choice of $s$.
Similarly, choose $t \in Y$ and set 
$v_Y = | \{ s \in X \mid m_{s,t} \ge 3\}|$ and $p_Y = \sum_{s \in X} \langle \epsilon_t, \epsilon_s \rangle = \langle\epsilon_t, a_X \rangle$.
The hypothesis that there exist $s \in X$ and $t \in Y$ such that $m_{s,t} \ge 3$ implies that $v_X \ge 1$ and $v_Y \ge 1$.

Let $s \in X$ and $t \in Y$.
If $m_{s,t} \ge 3$, then $\langle \epsilon_s, \epsilon_t \rangle \le -\frac{1}{2}$, and if $m_{s,t}=2$, then $\langle \epsilon_s, \epsilon_t \rangle=0$.
It follows that
\begin{equation} \label{eq3_1}
p_X \le - \frac{v_X}{2}\,.
\end{equation}
On the other hand, we have
\begin{equation}\label{eq3_2}
|X|\,v_X = |Y|\,v_Y\,.
\end{equation}
This is the number of edges in $\Gamma$ connecting an element of $X$ with an element of $Y$.
A direct calculation shows that
\begin{equation}\label{eq3_3}
\|a_X\| = \left\{ \begin{array}{ll}
\sqrt{|X|} & \text{if } X \text{ is of type } I\,,\\ 
\sqrt{|X|(1-\cos(\pi/m))} & \text{if } X \text{ is of type } II_m\,.
\end{array}\right.
\end{equation}
Finally, by definition of $p_X$,
\begin{equation}\label{eq3_4}
\langle a_X,a_Y \rangle = |X|\,p_X\,.
\end{equation}

{\it Case 1: $X$ and $Y$ are of type $I$.}
Applying Equations (\ref{eq3_2}), (\ref{eq3_3}), and (\ref{eq3_4}) we get 
\begin{equation}\label{eq3_5}
\langle \tilde \epsilon_X, \tilde \epsilon_Y \rangle = \frac{p_X\, \sqrt{v_Y}}{\sqrt{v_X}}\,.
\end{equation}
Applying Equation (\ref{eq3_1}) to this equality we get
$\langle \tilde \epsilon_X, \tilde \epsilon_Y \rangle \le - \frac{\sqrt{v_Xv_Y}}{2}$.
It follows that, if either $v_X \ge 4$, or $v_Y \ge 4$, or $v_X,v_Y\ge 2$, then
$\langle \tilde \epsilon_X, \tilde \epsilon_Y \rangle \le - 1$.
If $v_X=1$, $v_Y \ge 2$ and $p_X \le -\cos(\pi/4) = -\frac{1}{\sqrt{2}}$, then, by Equation (\ref{eq3_5}),
$\langle \tilde \epsilon_X, \tilde \epsilon_Y \rangle \le - 1$.
If $v_X=1$, $v_Y=3$ and $p_X= -\cos (\pi/3)= - \frac{1}{2}$, then, by Equation (\ref{eq3_5}),
$\langle \tilde \epsilon_X, \tilde \epsilon_Y\rangle = - \frac{\sqrt{3}}{2} = - \cos( \pi/6)$.
In this case $(Y,X)$ is a bi-orbit of type 5 and $\tilde m_{Y,X} = \tilde m_{X,Y}=6$.
If $v_X=1$, $v_Y=2$ and $p_X= -\cos (\pi/3)= - \frac{1}{2}$, then, by Equation (\ref{eq3_5}),
$\langle \tilde \epsilon_X, \tilde \epsilon_Y \rangle = - \frac{\sqrt{2}}{2} = - \cos( \pi/4)$.
In this case $(Y,X)$ is a bi-orbit of type 2 and $\tilde m_{Y,X} = \tilde m_{X,Y}=4$.
If $v_X=1$, $v_Y=1$ and $p_X= -\cos (\pi/m)$ with $m \neq \infty$, then, by Equation (\ref{eq3_5}),
$\langle \tilde \epsilon_X, \tilde \epsilon_Y \rangle = -  \cos( \pi/m)$.
In this case $(Y,X)$ is a bi-orbit of type 1 and $\tilde m_{Y,X} = \tilde m_{X,Y}=m$.
Finally, if $v_X=v_Y=1$ and $p_X\le -1$, then, by Equation (\ref{eq3_5}),
$\langle \tilde \epsilon_X, \tilde \epsilon_Y \rangle = p_X \le -1$.
In this case $(Y,X)$ is a bi-orbit of type $1$ and $\tilde m_{Y,X} = \tilde m_{X,Y} = \infty$.
 
{\it Case 2: $X$ is of type $II_m$ and $Y$ is of type $I$.}
Applying Equations (\ref{eq3_2}), (\ref{eq3_3}), and (\ref{eq3_4}) we get 
\begin{equation}\label{eq3_6}
\langle \tilde \epsilon_X, \tilde \epsilon_Y \rangle = \frac{p_X \sqrt{v_Y}}{\sqrt{v_X(1- \cos(\pi/m))}}\,.
\end{equation}
Applying Equation (\ref{eq3_1}) to this equality, we get 
\begin{equation}\label{eq3_7}
\langle \tilde \epsilon_X, \tilde \epsilon_Y \rangle \le - \frac{\sqrt{v_Xv_Y}}{2\sqrt{(1- \cos(\pi/m))}}\,.
\end{equation}
If $m\ge 5$, then $\sqrt{ 1 - \cos( \pi / m)} < \frac{1}{2}$, hence, by Equation (\ref{eq3_7}),
$\langle \tilde \epsilon_X, \tilde \epsilon_Y \rangle \le - \sqrt{v_Xv_Y} \le -1$.
So, we can assume that $m \in \{3,4\}$.
Then we have $\sqrt{ 1 - \cos( \pi / m)} \le \frac{1}{\sqrt{2}}$ and, by Equation (\ref{eq3_7}),
$\langle \tilde \epsilon_X, \tilde \epsilon_Y \rangle \le - \frac{\sqrt{v_Xv_Y}}{\sqrt{2}}$.
It follows that, if either $v_X \ge 2$, or $v_Y \ge 2$, then 
$\langle \tilde \epsilon_X, \tilde \epsilon_Y \rangle \le - 1$.
If $v_X=1$, $v_Y=1$ and $p_X \le - \cos (\pi/4) = - \frac{1}{\sqrt{2}}$, then, by Equation (\ref{eq3_6}),
$\langle \tilde \epsilon_X, \tilde \epsilon_Y \rangle \le - 1$.
If $v_X=1$, $v_Y=1$, $p_X = - \cos (\pi/3) = - \frac{1}{2}$ and $m=4$, then, by Equation (\ref{eq3_6}),
$\langle \tilde \epsilon_X, \tilde \epsilon_Y \rangle = -\frac{\sqrt{2+\sqrt{2}}}{2} = - \cos( \pi/8)$.
In this case $(Y,X)$ is a bi-orbit of type 4 and $\tilde m_{Y,X} = \tilde m_{X,Y}=8$.
If $v_X=1$, $v_Y=1$, $p_X = - \cos (\pi/3) = - \frac{1}{2}$ and $m=3$, then, by Equation (\ref{eq3_6}),
$\langle \tilde \epsilon_X, \tilde \epsilon_Y \rangle = - \frac{1}{\sqrt{2}} = - \cos( \pi/4)$.
In this case $(Y,X)$ is a bi-orbit of type 3 and $\tilde m_{Y,X} = \tilde m_{X,Y}=4$.

{\it Case 3: $X$ is of type $II_m$ and $Y$ is of type $II_{m'}$.}
Applying Equations (\ref{eq3_2}), (\ref{eq3_3}) and (\ref{eq3_4}) we get
\[
\langle \tilde \epsilon_X, \tilde \epsilon_Y \rangle = \frac{p_X \sqrt{v_Y}}{\sqrt{v_X (1- \cos(\pi/m))(1- \cos(\pi/m'))}}\,.
\]
Applying Equation (\ref{eq3_1}) to this equality we get
\[
\langle \tilde \epsilon_X, \tilde \epsilon_Y \rangle \le - \frac{\sqrt{v_Xv_Y}}{2\sqrt{(1- \cos(\pi/m)) (1- \cos(\pi/m'))}}\,.
\]
Since $\sqrt{(1- \cos(\pi/m))} \le \frac{1}{\sqrt{2}}$ and $\sqrt{(1- \cos(\pi/m'))} \le \frac{1}{\sqrt{2}}$, it follows that 
$\langle \tilde \epsilon_X, \tilde \epsilon_Y \rangle \le - \sqrt{v_Xv_Y} \le -1$.
\end{proof}

\begin{lem}\label{lem3_4}
Let $X \in \SS$, and let $x \in V^G$.
Then $w_X (x) = x -2\langle x, \tilde \epsilon_X \rangle \tilde \epsilon_X$.
\end{lem}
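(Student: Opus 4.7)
My plan is to check separately how $w_X$ acts on the line $\R\tilde\epsilon_X$ and on its orthogonal complement inside $V^G$, and then to assemble the two computations via the decomposition
\[
x = \langle x, \tilde\epsilon_X\rangle\,\tilde\epsilon_X + y, \qquad y = x - \langle x, \tilde\epsilon_X\rangle\,\tilde\epsilon_X.
\]
Since $\tilde\epsilon_X \in V^G$ by construction, both summands lie in $V^G$, and $y$ is orthogonal to $\tilde\epsilon_X$. The formula to be proved thus reduces to the two identities $w_X(\tilde\epsilon_X) = -\tilde\epsilon_X$ and $w_X(y) = y$.

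For the orthogonal part, I would exploit the $G$-invariance of the form. If $y \in V^G$, then for every $g \in G$ and every $s \in S$ we have $\langle y, \epsilon_s\rangle = \langle g^{-1}(y), \epsilon_s\rangle = \langle y, \epsilon_{g(s)}\rangle$, so $\langle y, \epsilon_s\rangle$ is constant along each $G$-orbit. In particular it takes a common value $\alpha$ on $X$, hence $\langle y, a_X\rangle = |X|\,\alpha$. The orthogonality assumption $\langle y, \tilde\epsilon_X\rangle = 0$ forces $\alpha = 0$, whence $f_s(y) = y$ for every $s \in X$, and therefore $w_X(y) = y$ since $w_X \in W_X$.

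For the component along $\tilde\epsilon_X$, the key point is the identity $w_X(a_X) = -a_X$. This follows from the standard fact that the longest element of a finite Coxeter group permutes the negatives of the simple roots: $w_X(\epsilon_s) = -\epsilon_{\sigma(s)}$ for some involution $\sigma$ of $X$, so summing over $X$ yields $w_X(a_X) = -a_X$. Alternatively, one can verify it by hand using the classification in Lemma \ref{lem3_2}. In type $I$ the $\epsilon_s$ ($s \in X$) are pairwise orthogonal, the generators in $X$ pairwise commute, and a direct induction gives $w_X(a_X) = a_X - 2\sum_{s\in X}\epsilon_s = -a_X$. In type $II_m$, $\Gamma_X$ decomposes into mutually orthogonal two-vertex components $X_i=\{s_i,t_i\}$, each $w_{X_i}$ fixes $V_{X_j}$ pointwise for $j\neq i$, and a short case check on the parity of $m$ (for $m$ even, $w_{X_i}$ acts as $-\mathrm{Id}$ on $V_{X_i}$; for $m$ odd, $w_{X_i}$ sends $\epsilon_{s_i}\mapsto -\epsilon_{t_i}$ and $\epsilon_{t_i}\mapsto -\epsilon_{s_i}$) shows in both cases that $w_{X_i}(\epsilon_{s_i}+\epsilon_{t_i}) = -(\epsilon_{s_i}+\epsilon_{t_i})$, whence $w_X(a_X) = -a_X$.

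Assembling the two pieces gives $w_X(x) = -\langle x, \tilde\epsilon_X\rangle\,\tilde\epsilon_X + y = x - 2\langle x, \tilde\epsilon_X\rangle\,\tilde\epsilon_X$, as required. The only genuinely nontrivial ingredient is the identity $w_X(a_X) = -a_X$; once it is in hand, everything else is a formal consequence of the $G$-invariance of $\langle\,.,.\,\rangle$ together with the orthogonal decomposition of $x$ above.
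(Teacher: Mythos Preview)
Your argument is correct. Both you and the paper begin with the same fact, $w_X(\tilde\epsilon_X)=-\tilde\epsilon_X$, obtained from $w_X(\Pi_X)=-\Pi_X$; after that the two proofs diverge.

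The paper does \emph{not} decompose $x$ orthogonally. Instead it observes that every element of $W_X$ moves any vector of $V$ by an element of $V_X$, so $w_X(x)=x+\sum_{s\in X}\lambda_s\epsilon_s$; the $G$-equivariance of $w_X$ together with $x\in V^G$ forces all the $\lambda_s$ to coincide, giving $w_X(x)=x+\lambda\,\|a_X\|\,\tilde\epsilon_X$. The scalar $\lambda$ is then pinned down by computing $\langle x,\tilde\epsilon_X\rangle=\langle w_X(x),w_X(\tilde\epsilon_X)\rangle$. Your route is more geometric: you split $x=\langle x,\tilde\epsilon_X\rangle\tilde\epsilon_X+y$ and use the nice observation that for $y\in V^G$ orthogonal to $\tilde\epsilon_X$ one has $\langle y,\epsilon_s\rangle=0$ for \emph{every} $s\in X$ individually, so each generator of $W_X$ already fixes $y$. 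This is arguably cleaner, since it exhibits $w_X$ on $V^G$ as a genuine reflection (identity on a hyperplane, $-1$ on its normal line). The paper's route, on the other hand, never needs to know that $y$ is fixed by the individual generators $f_s$, only that the net displacement lies in $V_X$; it would adapt more readily to situations where one wants to avoid splitting off a line using the form. In the present setting both arguments are equally short.
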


\begin{proof}
Let $\Gamma'$ be a Coxeter graph, and let $(W',S')$ be its associated Coxeter system, such that $W'$ is finite.
Let $w_0'$ be the longest element of $W'$, and let $(V', \langle .,.\rangle', \Pi')$ be the canonical root basis of $\Gamma'$.
Then, by Bourbaki \cite{Bourb1}, $w_0'(\Pi') = -\Pi'$.

Let $X \in \SS$.
Recall that $\Pi_X = \{ \epsilon_s \mid s \in X \}$.
By Lemma \ref{lem2_2} and the above, we have $w_X (\Pi_X) = - \Pi_X$, hence $w_X (a_X) = -a_X$, therefore $w_X (\tilde \epsilon_X) = - \tilde \epsilon_X$.

Recall that $V_X$ denotes the linear subspace of $V$ spanned by $\Pi_X$.
For all $x \in V$ and all $u \in W_X$ there exists $y \in V_X$ such that $u (x) = x+y$.
This is true by definition for all $s \in X$, hence it is true for all $u \in W_X$.
Let $x \in V^G$.
Let $y \in V_X$ be such that $w_X (x) = x +y$.
Let $y = \sum_{s \in X} \lambda_s \epsilon_s$ be the expression of $y$ in the basis $\Pi_X$.
For $g \in G$ we have 
\begin{gather*}
x + \sum_{s \in X} \lambda_s \epsilon_s = w_X (x) = g(w_X) (g(x)) = g(w_X (x))\\
= g(x) + \sum_{s \in X} \lambda_s\, g(\epsilon_s) = x + \sum_{s \in X} \lambda_s \epsilon_{g(s)}\,,
\end{gather*}
hence $\lambda_s = \lambda_{g^{-1}(s)}$ for all $s \in X$.
Since $G$ acts transitively on $X$, it follows that $\lambda_s=\lambda_t$ for all $s,t \in X$.
So, there exists $\lambda \in \R$ such that $w_X (x) = x + \lambda a_X = x + \lambda\,\|a_X\| \tilde \epsilon_X$.

We have 
\[
\langle x, \tilde \epsilon_X \rangle = \langle w_X (x), w_X (\tilde \epsilon_X) \rangle = \langle x + \lambda\, \| a_X \| \tilde \epsilon_X, - \tilde \epsilon_X \rangle = - \langle x, \tilde \epsilon_X \rangle - \lambda\, \| a_X \|\,,
\]
hence $\lambda \, \| a_X \| = -2 \langle x, \tilde \epsilon_X \rangle$.
So, $w_X (x) = x -2 \langle x, \tilde \epsilon_X \rangle \tilde \epsilon_X$.
\end{proof}

\begin{proof}[Proof of Theorem \ref{thm2_3}]
We have $\langle \tilde \epsilon_X, \tilde \epsilon_X\rangle = 1$ for all $X \in \SS$ by definition. 
We have  
\[
\begin{array}{ll}
\langle \tilde \epsilon_X, \tilde \epsilon_Y \rangle = - \cos ( \pi/ \tilde m_{X,Y}) & \text{if } \tilde m_{X,Y} \neq \infty\,,\\
\langle \tilde \epsilon_X, \tilde \epsilon_Y \rangle \in ( -\infty, -1] & \text{if } \tilde m_{X,Y} = \infty\,,
\end{array}
\]
by Lemma \ref{lem3_3}.
Let $\chi \in V^*$ be such that $\chi(\epsilon_s) > 0$ for all $s \in S$.
Let $\tilde \chi: V^G \to \R$ be the restriction of $\chi$ to $V^G$.
Then, for $X \in \SS$,
$\tilde \chi(\tilde \epsilon_X) = \frac{1}{\| a_X \|} \sum_{s \in X} \chi (\epsilon_s) >0$.
So, $(V^G, \langle .,. \rangle, \tilde \Pi^G )$ is a root basis of $\tilde \Gamma$.

Let $(\tilde W, \tilde S)$ be a Coxeter system of $\tilde \Gamma$, where $\tilde S = \{ \tilde s_X \mid X \in \SS \}$ is a set in one-to-one correspondence with $\SS$.
By Lemma \ref{lem3_1}, the map $\tilde S \to \SS_W$, $\tilde s_X \mapsto w_X$, induces a surjective homomorphism $\gamma : \tilde W \to W^G$.
By Lemma \ref{lem3_4}, the composition $f^G \circ \gamma: \tilde W \to \GL(V^G)$ is the rooted representation associated with $(V^G, \langle .,. \rangle, \tilde \Pi)$.
By Theorem \ref{thm1_1}, it follows that $f^G \circ \gamma$ is injective, hence $\gamma$ is an isomorphism.
So, $(W, \SS_W)$ is a Coxeter system of $\tilde \Gamma$, and $f^G : W^G \to \GL (V^G)$ is the rooted representation associated with $(V^G, \langle .,. \rangle, \tilde \Pi)$.
\end{proof}



\end{document}